\documentclass[11pt]{amsart}

\usepackage{amsmath,amssymb,amsbsy,amsthm,latexsym,
         amsopn,amstext,amsxtra,euscript,amscd,amsthm, mathtools, dsfont}
         
\usepackage{fullpage,amssymb,amsmath}
\usepackage{enumerate}
\usepackage[shortlabels]{enumitem} 
\usepackage{comment}
\usepackage{hyperref} 
\usepackage[capitalise,nameinlink]{cleveref}
\usepackage{url}
\usepackage{xcolor}
\usepackage{amsmath,amsthm,amssymb, graphicx, multicol, array, mathtools, enumerate,enumitem,hyperref,algorithm,algorithmic,tikz}
\usepackage{fancyhdr}
\usepackage{upgreek}
\usepackage{physics}
\usepackage{xcolor}

\usepackage{bbm}
\usepackage{mathrsfs} 
\newtheorem*{corollary*}{Corollary}
\newtheorem*{theorem*}{Theorem}
\newtheorem{theorem}{Theorem} 
\newtheorem{corollary}[theorem]{Corollary}
\newtheorem{proposition}[theorem]{Proposition}
\newtheorem{lemma}[theorem]{Lemma}

\newtheorem*{question*}{Question}

\theoremstyle{definition}

\theoremstyle{remark}

\setcounter{tocdepth}{1}
\numberwithin{equation}{section}

\newcommand{\N}{\mathbb{N}}

\newcommand{\R}{\mathbb{R}}

\renewcommand{\epsilon}{\varepsilon}

\usepackage{pgfplots}
\pgfplotsset{compat=1.14}
\pgfplotsset{every tick label/.append style={font=\tiny}}
\usepgfplotslibrary{fillbetween}
         
\begin{document}

\title{Failure of $L^p$ Symmetry of Zonal Spherical Harmonics}

\author{Gabriel Beiner}
\address{Department of Mathematics\\ University of Toronto\\ 40 St. George Street\\ Toronto\\
ON\\ Canada\\ M5S 2E4
} 
\email{gabriel.beiner@mail.utoronto.ca}

\author{William Verreault}
\address{
D\'{e}partement de Math\'{e}matiques et de Statistique\\ 
Universit\'{e} Laval\\ 
Qu\'ebec\\
QC\\
G1V 0A6 \\
Canada} 
\email{william.verreault.2@ulaval.ca}

\maketitle

\vspace*{-4mm}

\begin{abstract}
In this paper, we show that the 2-sphere does not exhibit symmetry of $L^p$ norms of eigenfunctions of the Laplacian for $p\geq 6$. In other words, there exists a sequence of spherical eigenfunctions $\psi_n$, with eigenvalues $\lambda_n\to\infty$ as $n\to\infty$, such that the ratio of the $L^p$ norms of the positive and negative parts of the eigenfunctions
%$\|\psi_n\chi_{\{\psi_n>0\}}\|_p / \|\psi_n\chi_{\{\psi_n<0\}}\|_p $
does not tend to $1$ as $n\to\infty$ when $p\geq 6$. Our proof relies on fundamental properties of the Legendre polynomials and Bessel functions of the first kind.

\end{abstract}

\section{Introduction} \label{sec:intro} 

The statistical properties of eigenfunctions of the Laplace--Beltrami operator on general Riemannian manifolds has been a fruitful area of reasearch. One area of interest, based on conjectures of quantum chaos \cite{Berry_Conjec, Marklof}, is the study of symmetries of the positive and negative parts of these eigenfunctions. Jakobson and Nadirashvili \cite{J-N} have in particular investigated the ratio of their $L^p$ norms, 
% of their positive and negative parts, 
proving the following result.
\begin{theorem}[Jakobson--Nadirashvilli, \cite{J-N}] \label{thm:j-n}
Let $M$ be a smooth compact manifold and $p \geq 1$. Then there exists $C>0$, depending only on $p$ and the manifold $M$, such that for any nonconstant eigenfunction $\psi$ of the Laplacian,
      \[1/C \leq \|\psi_+\|_p/\|\psi_-\|_p \leq C.\]
\end{theorem}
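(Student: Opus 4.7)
The orthogonality of $\psi$ to the constants, a direct consequence of nonconstancy, gives $\int_M \psi \, dv = 0$ and hence $\|\psi_+\|_1 = \|\psi_-\|_1$. This settles the case $p = 1$ with $C = 1$, and it is the only global identity that nonconstancy supplies; the strategy is to leverage it together with pointwise information to reach larger $p$.

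For $p > 1$, the plan is to interpolate between $L^1$ and $L^\infty$ via H\"older-type inequalities. The upper bound
\[
\|\psi_+\|_p \leq \|\psi_+\|_\infty^{(p-1)/p}\|\psi_+\|_1^{1/p}
\]
is immediate, and a matching lower bound on $\|\psi_-\|_p$ would come from a non-concentration estimate near the extrema of $\psi$. At a point $x_-$ where $|\psi|$ attains its maximum on $\{\psi < 0\}$, the Hessian $D^2\psi(x_-)$ is positive semidefinite with trace $\lambda\|\psi_-\|_\infty$ by the eigenfunction equation, so a second-order Taylor expansion --- with the cubic error controlled by the standard elliptic bound $|D^3\psi| \lesssim \lambda^{3/2}\|\psi\|_\infty$ --- shows that $|\psi| \geq \tfrac{1}{2}\|\psi_-\|_\infty$ on a geodesic ball $B(x_-, \delta/\sqrt{\lambda})$ for some fixed $\delta > 0$. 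Integrating over this ball yields $\|\psi_-\|_p \gtrsim \lambda^{-n/(2p)}\|\psi_-\|_\infty$, and combining with $\|\psi_+\|_1 = \|\psi_-\|_1$ reduces everything to a bound of the form
\[
\|\psi_+\|_p / \|\psi_-\|_p \lesssim_{p,M} \bigl(\|\psi_+\|_\infty / \|\psi_-\|_\infty\bigr)^{(p-1)/p}\,\lambda^{n/(2p)}.
\]

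The main obstacle is then to absorb the residual factor $\lambda^{n/(2p)}$ and reach a bound whose constant truly depends only on $p$ and $M$. This cancellation cannot be extracted from the preceding wavelength-scale analysis alone and requires a genuinely global input; plausible candidates are a Harnack-type chaining argument comparing $L^\infty$ values on adjacent nodal components of $\psi$, or sharp Sogge-type $L^p$ bounds paired with matching reverse inequalities that together force the $\lambda$-dependent pieces to cancel in the ratio. This is the step where I would expect the bulk of the technical work to lie, and where the argument would genuinely use the eigenfunction equation beyond its zero-mean consequence.
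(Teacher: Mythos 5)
This statement is quoted from Jakobson--Nadirashvili \cite{J-N}; the paper under review gives no proof of it, so your proposal has to stand on its own. The $p=1$ case is correct: $\int_M\psi=0$ gives $\|\psi_+\|_1=\|\psi_-\|_1$. For $p>1$, however, your write-up has a genuine gap, and you name it yourself: the chain of inequalities terminates in a bound carrying a residual factor $\lambda^{n/(2p)}$, and the step that would absorb it is left as a list of ``plausible candidates'' (Harnack chaining, Sogge bounds with matching reverse inequalities). That absorption is precisely the content of the theorem --- the wavelength-scale analysis you perform is the easy part, and what remains is the entire difficulty --- so the proposal is a plan for a proof rather than a proof. (For calibration: the zonal harmonics studied in this very paper, normalized so $\|\psi\|_\infty=1$, have $\|\psi\|_p\to 0$ for small $p$, so no bound of the form $\|\psi_-\|_p\gtrsim\|\psi\|_\infty$ can rescue the argument; the $\lambda$-powers really must cancel in the ratio, which is what \cite{J-N} accomplishes.)

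There is also a quantitative flaw in the part you do carry out. At the negative extremum $x_-$ you expand to second order and control the error by $|D^3\psi|\lesssim\lambda^{3/2}\|\psi\|_\infty$; likewise the Hessian itself is only bounded by $\lambda\|\psi\|_\infty$, not by $\lambda\|\psi_-\|_\infty$. Consequently the ball on which $|\psi|\ge\tfrac12\|\psi_-\|_\infty$ has radius comparable to $\delta\,\lambda^{-1/2}\bigl(\|\psi_-\|_\infty/\|\psi\|_\infty\bigr)^{1/2}$ rather than $\delta\,\lambda^{-1/2}$, which degrades exactly when $\|\psi_-\|_\infty\ll\|\psi\|_\infty$ --- the regime one cannot a priori exclude. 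Repairing this requires the quasi-symmetry of sup norms, $\|\psi_-\|_\infty\ge c\,\|\psi\|_\infty$, which is itself a nontrivial theorem (Donnelly--Fefferman, Jakobson--Nadirashvili) and not something your sketch establishes. So even the ``non-concentration'' half of the argument silently imports an unproved input, in addition to the openly missing final step.
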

Here, $\psi_+$ and $\psi_-$ stand for the positive and negative parts of $\psi$, respectively. Analogous quasi-symmetry results for the support of the volume of these positive and negative parts were first obtained by Donnelly and Fefferman \cite{MR943927} while they were investigating symmetry distribution problems in relation with Yau's conjecture.
At the end of their paper, Jakobson and Nadirashvili ask whether this ratio always tends to one as the corresponding eigenvalue goes to infinity on a given manifold for $p> 1$ (also see \cite{jakobson2001geometric}). 
% The authors 
They comment that the even zonal spherical harmonics on the 2-sphere provide a case where this fails for the $L^\infty$ norm. In this paper, we extend that result on the 2-sphere to all $p\geq 6$.

\begin{theorem} \label{thm:main}
For $p\geq 6$, there exists a sequence of eigenfunctions $\psi_n$ on the 2-sphere, with eigenvalues $\lambda_n\to\infty$ as $n\to\infty$, such that
\[\lim_{n\to\infty}\frac{\|\psi_{n,+}\|_p}{\|\psi_{n,-}\|_p} > 1 .\]

\end{theorem}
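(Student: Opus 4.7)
The candidate sequence is $\psi_n(\theta,\varphi) = P_{2n}(\cos\theta)$, the zonal spherical harmonic of even degree $2n$, which is an eigenfunction with eigenvalue $\lambda_n = 2n(2n+1)\to\infty$. The evenness of the degree gives the symmetry $\psi_n(\pi-\theta)=\psi_n(\theta)$, so both polar caps contribute identically and
\[
\|\psi_{n,\pm}\|_p^p = 4\pi\int_0^{\pi/2}\bigl(P_{2n}(\cos\theta)\bigr)_\pm^p \sin\theta\,d\theta.
\]
The plan is to compute the precise $n\to\infty$ asymptotics of both integrals and extract the limiting ratio.

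The main tool is Hilb's asymptotic expansion
\[
P_{2n}(\cos\theta) = \sqrt{\theta/\sin\theta}\, J_0(N\theta) + R_n(\theta), \qquad N = 2n+\tfrac12,
\]
with an error term $R_n$ that is negligible for the purposes of the $L^p$ norm. After the substitution $u = N\theta$, the prefactor $\sqrt{\theta/\sin\theta}$ tends pointwise to $1$ while the Jacobian contributes $\sin(u/N)\,du/N \sim u\,du/N^2$, so dominated convergence yields
\[
\lim_{n\to\infty} N^{2}\,\|\psi_{n,\pm}\|_p^p = 4\pi\, I_p^\pm, \qquad I_p^\pm := \int_0^\infty \bigl(J_0(u)\bigr)_\pm^p\, u\,du.
\]
The classical decay $|J_0(u)|\lesssim u^{-1/2}$ guarantees $I_p^\pm < \infty$ precisely when $p > 4$, covering the range of interest. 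Consequently the desired limit equals $(I_p^+/I_p^-)^{1/p}$, and it remains to show $I_p^+ > I_p^-$ for every $p\geq 6$.

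For this analytic inequality I would bound $I_p^+$ from below using only the first lobe $[0,j_{0,1}]$, where $J_0$ attains its global maximum $J_0(0)=1$, and bound $I_p^-$ from above by summing over the negative lobes $[j_{0,2k-1},j_{0,2k}]$. The quantitative facts needed are that the local extrema of $J_0$ on $(j_{0,1},\infty)$ are bounded in modulus by $c_0\approx 0.4028$ and that $|J_0(u)|\leq \sqrt{2/(\pi u)}$ controls the tail. The main obstacle is the sharpness at the endpoint $p=6$: the margin $I_6^+ - I_6^-$ is small enough that a careful quantitative treatment of the first few negative lobes is required, rather than just asymptotic bounds. Once the inequality is established at $p=6$, its persistence for $p>6$ follows from the observation that increasing $p$ concentrates the weighted measure $|J_0(u)|^p u\,du$ near the unique global maximum of $|J_0|$, which lies strictly in the positive part of $J_0$.
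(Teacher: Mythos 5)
Your route is genuinely different from the paper's and, if completed, would prove something slightly stronger. The paper never passes to the Bessel scaling limit of the whole function: it bounds $\int_0^1 P_{4n,+}^p$ from below at finite $n$ by the area under an explicit triangle at $x=1$ (proved via the Legendre ODE and Bonnet's recursion), bounds $\int_0^1 P_{4n,-}^p$ from above by a Darboux sum over the negative lobes using Bruns' estimates for the zeros, and only then takes $n\to\infty$ using Cooper's theorem $y_{i,n}\to |J_0(j_i)|$ together with dominated convergence on the counting measure; moreover it only extracts a convergent \emph{subsequence} of the ratio via Jakobson--Nadirashvili compactness. Your Hilb-formula approach would instead identify the limit of the full sequence as $(I_p^+/I_p^-)^{1/p}$ exactly, which is cleaner and more informative. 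It is reassuring (and not a coincidence) that the two final numerical verifications coincide: the paper's triangle corresponds, after the substitution $u=N\theta$, precisely to the Taylor lower bound $J_0(u)\ge 1-u^2/4$ on $[0,2]$, giving $I_p^+\ge \int_0^2(1-u^2/4)^pu\,du=2/(p+1)$, and its Darboux sum corresponds to your lobe-by-lobe rectangle bound on $I_p^-$; both yield margins of roughly $1.5\%$ at $p=6$.

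That said, as written the argument has genuine gaps at exactly the points that carry the content of the theorem. First, the decisive inequality $I_6^+>I_6^-$ is only planned, not proved; since this is where the hypothesis $p\ge 6$ enters, the proof is not complete without it. Your plan is sound, and with the crudest inputs (Watson's localization $j_{0,m}\in((m-\tfrac12)\pi,m\pi)$ to bound $\tfrac12(j_{0,2k}^2-j_{0,2k-1}^2)\le 3\pi^2 k$, Szeg\H{o}'s bound $|J_0(j_{2k-1})|\le\sqrt{2/(\pi^2(2k-1))}$, and the zeta-function evaluation of $\sum_k k(2k-1)^{-3}$) one gets $I_6^-\le 0.282$ against $I_6^+\ge 2/7\approx 0.286$ --- it works, but only barely, so the numerics must actually be carried out. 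Second, your monotonicity-in-$p$ heuristic (``concentration near the global maximum'') is not an argument; a correct version is $I_p^-\le (0.403)^{p-6}I_6^-$ together with $I_p^+\ge 2/(p+1)$ and the elementary fact that $(p+1)(0.403)^{p}$ is decreasing for $p\ge 6$. Third, the analytic preliminaries are asserted rather than proved: you need a dominating function for the rescaled integrands (Bernstein's inequality $|P_{2n}(\cos\theta)|\le\sqrt{2/(2\pi n)}(\sin\theta)^{-1/2}$ gives $|P_{2n}(\cos(u/N))|^pN\sin(u/N)\lesssim u^{1-p/2}$, integrable only for $p>4$, which is where that threshold really comes from), and you must check that the Hilb remainder, which is $O(n^{-3/2}\theta^{1/2})$ only for $\theta\ge c/n$ with a separate bound below that, contributes $o(N^{-2})$ to the $p$-th power integrals. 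All of these are fixable, but none is automatic.
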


The authors 
% of this paper 
along with Eagles and Wang \cite{FUSRP1} have already shown a case where symmetry fails on the standard flat $d$-torus for $d\geq 3$. Their argument relies on an example of Mart\'inez and Torres de Lizaur \cite{eigentorus} used to disprove the symmetry conjecture on the distribution of the eigenfunctions, that is, to show that the ratio of the volume of the support of $\psi_+$ to $\psi_-$ in the high-energy limit does not tend to $1$. The proof on the torus involves computational methods and uses the symmetry of the torus to generate a sequence from rescaling a single eigenfunction. In contrast, our proof relies purely on classical results about orthogonal polynomials and features a bona fide sequence of distinct eigenfunctions.

The argument in this paper also supplants the previous work on this question in a number of ways. Mart\'inez and Torres de Lizaur \cite{eigentorus} have shown that in the case of the flat 2-torus, $L^p$ symmetry holds for every eigenfunction, and so our argument provides the first case of the failure of symmetry for a 2-dimensional manifold and for a non-flat manifold. Mart\'inez and Torres de Lizaur \cite{signlegendre} have also shown that for the even spherical harmonics we use in our proof, the distribution ratio of the volume of supports of $\psi_+$ to $\psi_-$ tends to one as the corresponding eigenvalue tends to infinity. As such, our result is the first example of a sequence of eigenfunctions which have asymptotic distribution symmetry but not $L^p$ norm symmetry in the high energy limit. Lastly, since our result is primarily one about the Legendre polynomials, it may also be of interest to those studying the asymptotic behaviour of orthogonal polynomials independent of any of the geometric motivations underlying our study.

%\begin{corollary}
%It works for eigenfunctions on the projective plane...
%\end{corollary}

\section{Preliminaries and notation} 
% \textcolor{red}{(A few references needed in this section and in the introduction)}

% We set our notation and a brief background on Legendre polynomials and Bessel functions of the first kind here. We also state a few inequalities about these objects which we shall need later. Specific results about some of their further properties will be stated directly in the proofs.

For $n\in \N$, we denote the $n$th Legendre polynomial by $P_n$. We restrict our attention to $P_n$ in the domain $[0,1]$. Legendre polynomials can be defined in several equivalent ways (see \cite{pollard} pp.\:591-605). We will use their differential equation definition: for $x\in (-1,1)$,
\begin{equation}\label{eq:odeleg}
        P''_n(x) =\frac{1}{1-x^2}(2xP'_n(x) -n(n+1)P_n(x)),
    \end{equation} 
with the inital condition $P_n(1)=1$. From this definition we can recover $P'_n(1)=n(n+1)/2$. We will also use the fact that the $P_n$ satisfy Bonnet's recursion formula
\begin{equation} \label{eq:bonnet}
    (n+1)P_{n+1}(x)=(2n+1)xP_n(x)-nP_{n-1}(x),
    \end{equation}
and that Bernstein's inequality (\cite{szego_orth_pol}, p.\:165) gives a classic bound on $P_n(x)$ for $n\in\N$ and $x\in(-1,1)$:
\begin{equation} \label{eq:bernstein}
    |P_n(x)| \leq \sqrt{\frac{2}{\pi n}}(1-x^2)^{-1/4}.
\end{equation}

We label the positive zeroes of $P_n$ as $z_{i,n}$ for $i\in\{1, 2, \ldots , \lfloor n/2\rfloor\}$, where $0< z_{\lfloor n/2\rfloor,n} <z_{\lfloor n/2\rfloor-1,n} <\ldots < z_{1,n} <1$. Sometimes we abbreviate $z_{1,n}$ as $z_n$. A classic result of Bruns \cite{szego_bounds} gives estimates for $z_{i,n}$:
    \begin{equation} \label{eq:bruns}\cos\Big(\frac{i-\frac{1}{2}}{n+\frac{1}{2}}\pi\Big) \leq z_{i,n} \leq \cos\Big(\frac{i}{n+\frac{1}{2}}\pi\Big).
    \end{equation}
  
We label the local extremal points of $P_n$ as $x_{i,n}$ and their corresponding absolute values $|P_n(x_{i,n})|= y_{i,n}$ for $i\in\{1,\ldots , \lfloor (n-1)/2\rfloor\}$, where $0< x_{\lfloor (n-1)/2\rfloor,n} <x_{\lfloor (n-1)n/2\rfloor-1,n} <\ldots < x_{1,n} <1$.

We will also need to make use of the Bessel functions of the first kind, which we denote by $J_n$. We let $j_i$ denote the $i$th zero of $J_1$ greater than zero. Equivalently, since $J_0' =J_1$, $j_i$ are the critical points of $J_0$ and $J_0(j_i)$ are the local extrema. 

Watson's classic tome on Bessel functions \cite{watson_bessel} provides a full analysis of the zeroes of these functions. It can be inferred from this analysis that 
\begin{equation}
\label{eq:watson}
    \Big(i+\frac{1}{2}\Big)\pi > j_{i} > i\pi.
\end{equation}

Indeed, Watson (pp.\:478-479) shows that all the zeroes of $J_0(x)$ lie in intervals of the form $\qty(\frac{2n-1}{2}\pi, n\pi)$ and each such interval contains at least one zero. Similarly, all the zeroes of $J_1(x)$ lie in intervals of the form $\qty(n\pi, \frac{2n+1}{2}\pi)$ and each such interval has at least one zero. Watson also proves that the zeroes of $J_0$ and $J_1$ are interlacing (pp.\:479-480), and since the intervals above do not overlap, there must be exactly one zero of $J_0, J_1$ in each interval of the above forms, respectively, from which \eqref{eq:watson} follows.

These Bessel functions will appear in our argument via the following connection to the Legendre polynomials as shown by  Cooper \cite{cooper_1950}: 
\begin{equation} \label{eq:cooper}
\lim_{n\to\infty} y_{i,n} = J_0(j_i).
\end{equation}

Finally,
Szeg{\H o} (\cite{szego_orth_pol} p.\:167) showed that for $\nu \in \qty[-\frac{1}{2},\frac{1}{2}]$, 
\begin{equation} \label{eq:szegop.167}
    |J_\nu(x)| \leq \sqrt{\frac{2}{\pi x}}.
\end{equation}

We end this subsection by proving a simple lemma about Legendre polynomials, which will be needed in the next section. We remind the reader that we abbreviate $z_{1,n}$ as $z_n$.

\begin{lemma} \label{lem:pn<x}
For $n\geq 1$ and $x\in [z_{n}, 1]$,
\[P_n(x) \leq x.\]
\end{lemma}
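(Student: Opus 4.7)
The plan is to prove that $P_n$ is convex on $[z_n,1]$ and then conclude by comparing $P_n$ to the linear chord joining $(z_n,0)$ and $(1,1)$. The case $n=1$ is immediate since $P_1(x)=x$, so I restrict throughout to $n\geq 2$.

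First, I will observe that $P_n'>0$ on $[z_n,1]$. This follows from a standard Rolle-type count: $P_n$ has $n$ simple zeros in $(-1,1)$, which forces all $n-1$ zeros of $P_n'$ to lie strictly between consecutive zeros of $P_n$, hence none in $[z_n,1]$. Combined with $P_n'(1)=n(n+1)/2>0$ and continuity, this gives positivity throughout the interval.

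The heart of the argument is showing $P_n''\geq 0$ on $[z_n,1]$. Rewriting the ODE \eqref{eq:odeleg} as $(1-x^2)P_n''(x)=h(x)$, where
\[
h(x):=2xP_n'(x)-n(n+1)P_n(x),
\]
I differentiate $h$ and eliminate $P_n''$ using the same ODE to obtain a first-order linear equation for $h$:
\[
h'(x)-\frac{2x}{1-x^2}h(x)=(2-n(n+1))P_n'(x).
\]
With integrating factor $1-x^2$ and the boundary value $h(1)=0$, integrating from $x$ to $1$ produces
\[
(1-x^2)h(x)=(n(n+1)-2)\int_x^1 (1-t^2)P_n'(t)\,dt,
\]
which is nonnegative on $[z_n,1]$ by the first step and the fact that $n\geq 2$. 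Hence $h\geq 0$, and so $P_n''\geq 0$ on $[z_n,1]$.

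Once convexity is established, the inequality follows quickly: since $P_n(z_n)=0$ and $P_n(1)=1$, convexity gives $P_n(x)\leq (x-z_n)/(1-z_n)$, and the chord itself satisfies $(x-z_n)/(1-z_n)-x=z_n(x-1)/(1-z_n)\leq 0$ on $[z_n,1]$, yielding $P_n(x)\leq x$. I expect the integrating-factor derivation that gives convexity to be the main technical step; the Rolle-type positivity of $P_n'$ and the chord comparison at the end are essentially bookkeeping.
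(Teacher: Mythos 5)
Your proof is correct, and it takes a genuinely different route from the paper. The paper argues by strong induction in $n$: using Bonnet's recursion \eqref{eq:bonnet} it shows $P_{n-1}-P_n \leq \frac{n+1}{n}(P_n-P_{n+1})$ on $[z_n,1]$, whence $x = P_1 \geq P_2 \geq \cdots \geq P_n$ on the nested intervals $[z_n,1]$. You instead fix $n$ and prove convexity of $P_n$ on $[z_n,1]$ directly from the differential equation \eqref{eq:odeleg}, then compare with the chord through $(z_n,0)$ and $(1,1)$. All your steps check out: the Rolle count gives $P_n'>0$ on $[z_n,1]$; the first-order equation for $h=2xP_n'-n(n+1)P_n$ and the integrating factor $1-x^2$ are computed correctly, and the resulting identity $(1-x^2)h(x)=(n(n+1)-2)\int_x^1(1-t^2)P_n'(t)\,dt$ indeed forces $h\geq 0$, hence $P_n''\geq 0$ on $[z_n,1)$ (with continuity handling the endpoint); and the chord lies below the identity because $z_n>0$ for $n\geq 2$. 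Your argument actually yields the slightly stronger bound $P_n(x)\leq (x-z_n)/(1-z_n)$, while the paper's yields the monotonicity $P_1\geq P_2\geq\cdots\geq P_n$ on $[z_n,1]$, so each proof carries extra information in a different direction. One remark: the convexity step could be obtained more cheaply from the standard interlacing fact that all $n-2$ zeros of $P_n''$ are real and lie strictly below the largest zero of $P_n'$, hence below $z_n$, so that $P_n''$ (which has positive leading coefficient) is positive on $[z_n,1]$; but your ODE derivation is self-contained and consistent with the tools the paper already uses.
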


\begin{proof}
We proceed by strong induction, noting that the result clearly holds for $P_1(x)=x$ and $P_2(x)=\frac{1}{2}(3x^2-1)$. 
Under the assumption $x\in[z_n, 1]$, we have $0\leq x\leq 1$ and $P_n(x)\geq0$, so from Bonnet's formula \eqref{eq:bonnet}, we obtain
\begin{align*}
(n+1)P_{n+1}(x)-(n+1)P_{n}(x)&=[(2n+1)x-(n+1)]P_n(x)-nP_{n-1}(x) \\
&\leq nP_n(x)-nP_{n-1}(x),
\end{align*}
hence
$$
P_{n-1}(x)-P_n(x) \leq \frac{(n+1)}{n}(P_n(x)-P_{n+1}(x)).
$$
Then as long as $P_{n-1}\geq P_n$ on $[z_n, 1]$, we have $P_{n}\geq P_{n+1}$ on $[z_{n+1},1]\subseteq [z_n, 1]$. Since $P_1\geq P_2$ on $[0,1]$, it follows by strong induction that $x=P_1  \geq P_2 \geq  \ldots \geq P_n$ on $[z_n,1]$ for all $n\geq 1$.
\end{proof}

%%%%%%%%%%%%%%%%%%%%%%
\section{Failure of asymptotic symmetry of $L^p$ norms on the sphere} \label{sec:main}
We start by outlining the idea behind the proof of \cref{thm:main}. We are looking for
a lower bound on $\|\psi_{n,+}\|_p$ and
an upper bound on $\|\psi_{n,-}\|_p$, where $\psi_n$ is the $n$th even zonal spherical harmonic (often denoted by $Y_{2n}^0(\theta,\varphi)$). Up to normalization, $\psi_n = P_{2n}(\cos\theta)$ where $\theta$ is the latitude on $S^2$, and so by a change of variables, it is enough to bound the ratio of the $L^p$ norms of the even Legendre polynomials on $[0,1]$. In particular we use the subsequence of the $4n$th polynomials (this is solely for some simplifications of the algebra in the proof of Lemma 5). In what follows, the absolute value of the positive and negative parts of $P_n$ are labelled as $P_{n,+}$ and $P_{n,-}$, respectively. 
We will underapproximate the $L^p$ norm of $P_{4n,+}$ as the area of a triangle bounding from below the connected component of the support of $P_{4n,+}$ containing 1 (since this component dominates the norm for large $p$ in the semi-classical limit). We will also overapproximate the $L^p$ norm of $ P_{4n,-}$ via an upper Darboux sum, using the zeroes of $P_{4n}$ as a partition. An illustration of this approximation is shown in \cref{fig:bounds}. Precisely, we will prove the following lemmas.
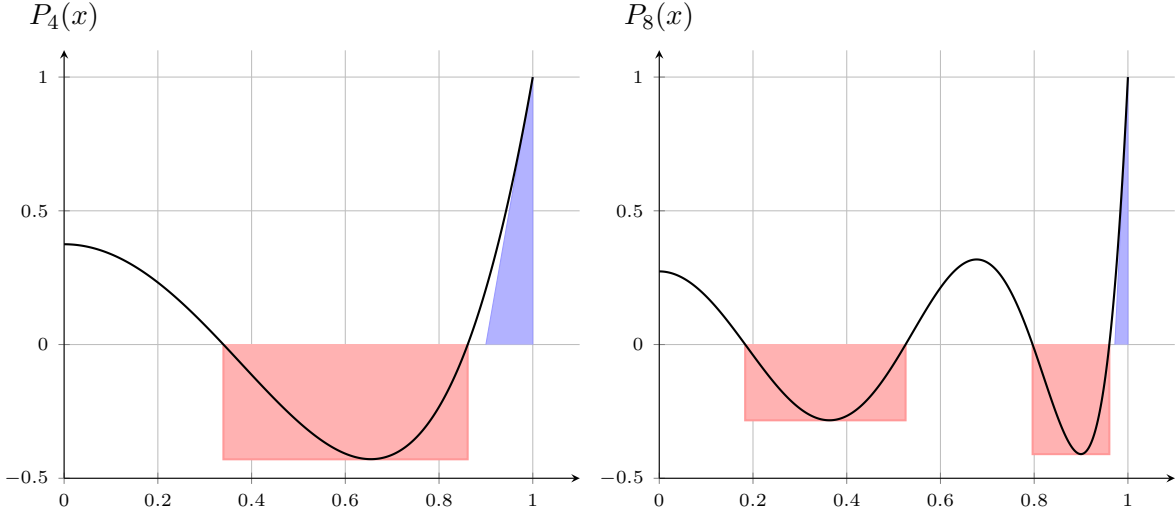
\begin{figure}
\begin{tikzpicture} 
    \begin{axis}[ylabel= $P_4(x)$,ylabel style={rotate=-90}, xmin=0, xmax =1.1, ymin=-0.5,grid, ymax=1.1, samples=500, axis lines=left, every axis y label/.style={
    at={(ticklabel* cs:1.02)},
    anchor=south,
},]
    
        \addplot [color=blue!40,fill=blue!30, 
                    ]coordinates {
            (0.9, 0) 
            (1, 1)
            (1, 0)  };
 \addplot [thick,color=red!40,fill=red!30, 
		]coordinates {
            (0.34, 0) 
            (0.34, -0.429)
            (0.861, -0.429) 
	   (0.861,0)
 };
\addplot[thick, domain=0:1, ] {(35*x^4-30*x^2+3)/8};

    \end{axis}
\end{tikzpicture}  
\begin{tikzpicture} 
    \begin{axis}[ylabel = $P_8(x)$, ylabel style={rotate=-90}, xmin=0, xmax =1.1, ymin=-0.5,grid, ymax=1.1, samples=500, axis lines=left, every axis y label/.style={
    at={(ticklabel* cs:1.02)},
    anchor=south,
},]
    
        \addplot [color=blue!40,fill=blue!30, 
                    ]coordinates {
            (0.9722, 0) 
            (1, 1)
            (1, 0)  };
 \addplot [thick,color=red!40,fill=red!30, 
		]coordinates {
            (0.1834, 0) 
            (0.1834, -0.2832)
            (0.5255, -0.2832) 
	   (0.5255,0)
 };
 \addplot [thick,color=red!40,fill=red!30, 
		]coordinates {
            (0.7967, 0) 
            (0.7967, -0.4097)
            (0.9603, -0.4097) 
	   (0.9603,0)
 };
\addplot[thick, domain=0:1, ] {(6435*x^8-12012*x^6+6930*x^4-1260*x^2+35)/128};

    \end{axis}
\end{tikzpicture}
\caption{Plots of the Legendre polynomials $P_4$ and $P_8$ indicating the corresponding approximations on the positive and negative parts of the two functions. The negative $L^1$ norm squared is overapproximated as a sum of areas of rectangles shown in red and the positive $L^1$ norm squared is underapproximated by the area of a triangle shown in blue. Higher $L^p$ norms are bounded by taking the $p$th power of the constant and linear functions corresponding to the approximation.}
\label{fig:bounds}
\end{figure}  

\begin{lemma} \label{lem:tri}
For $n\in \N$ and $p\in (0,\infty)$,
\[\int_{z_{n}}^{1} P_{n}^p  \geq \frac{2}{(p+1)n(n+1)}.\]
\end{lemma}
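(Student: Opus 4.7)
My plan is to prove the key pointwise bound $P_n'(x) \le n(n+1)/2$ on $[z_n,1]$, then combine it with the telescoping identity $\int_{z_n}^1 P_n^p P_n'\,dx = 1/(p+1)$ to conclude.

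To establish the pointwise bound, I would first rewrite the Legendre ODE~\eqref{eq:odeleg} in Sturm--Liouville form $((1-x^2)P_n'(x))' = -n(n+1)P_n(x)$, and integrate from $x$ to $1$. Since $(1-x^2)P_n'(x)$ vanishes at $x=1$, this yields
\[(1-x^2)P_n'(x) = n(n+1)\int_x^1 P_n(t)\,dt.\]
Then, by \cref{lem:pn<x}, $P_n(t)\le t$ on $[z_n,1]$, so $\int_x^1 P_n(t)\,dt \le \int_x^1 t\,dt = (1-x^2)/2$, giving the target inequality $P_n'(x)\le n(n+1)/2$ on $[z_n,1]$ after dividing by $1-x^2$.

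To finish, I would observe that $P_n,P_n'\ge 0$ on $[z_n,1]$: positivity of $P_n$ is immediate since $z_n$ is its largest zero, and positivity of $P_n'$ follows from Rolle's theorem (the $n-1$ zeros of $P_n'$ interlace the zeros of $P_n$ and thus all lie in $(-z_n,z_n)$, so $P_n'$ is nonvanishing on $[z_n,1]$ and positive by continuity from $P_n'(1)=n(n+1)/2>0$). Combining this with the pointwise upper bound yields $P_n(x)^p \ge \frac{2}{n(n+1)} P_n(x)^p P_n'(x)$ on $[z_n,1]$, and integrating gives
\[\int_{z_n}^1 P_n^p\,dx \;\ge\; \frac{2}{n(n+1)}\int_{z_n}^1 P_n^p P_n'\,dx \;=\; \frac{2}{(p+1)n(n+1)}\bigl[P_n^{p+1}\bigr]_{z_n}^1 \;=\; \frac{2}{(p+1)n(n+1)}.\]
The main non-routine step is recognizing that \cref{lem:pn<x} pairs cleanly with the Sturm--Liouville identity to yield a sharp bound on $P_n'$ on precisely the interval of integration; everything else is a one-line calculation.
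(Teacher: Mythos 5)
Your proof is correct, but it takes a genuinely different (and arguably cleaner) route than the paper. The paper constructs an explicit piecewise-linear minorant $g$ (zero on $\left[z_n, 1-\frac{2}{n(n+1)}\right]$, then rising linearly to $1$ with slope $n(n+1)/2$), and establishes $P_n\geq g$ by a contradiction argument: assuming the bound fails, the mean value and extreme value theorems produce an interior critical point $r$ of $P_n'$ with $P_n'(r)>n(n+1)/2$, and evaluating \eqref{eq:odeleg} at $r$ (where $P_n''(r)=0$) contradicts \cref{lem:pn<x}; a separate case analysis is also needed just to check that $g$ is well defined. You instead extract the underlying mechanism directly: integrating the Sturm--Liouville form of \eqref{eq:odeleg} from $x$ to $1$ and invoking \cref{lem:pn<x} gives the uniform derivative bound $0\le P_n'\le n(n+1)/2$ on $[z_n,1]$ in one step, with no case analysis. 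From there your substitution $\int_{z_n}^1 P_n^p P_n'\,dx=\frac{1}{p+1}$ replaces the paper's explicit computation of $\int g^p$; note that your derivative bound also immediately recovers the paper's statement $P_n\ge g$ (integrate $P_n'\le n(n+1)/2$ backward from $x=1$ to get $P_n(x)\ge 1-\frac{n(n+1)}{2}(1-x)$, and combine with $P_n\ge 0$), including the well-definedness of $g$. Both arguments lean on \cref{lem:pn<x} in an essential way and yield the same constant; yours trades the paper's geometric picture for a shorter, more mechanical derivation.
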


\begin{lemma} \label{lem:P-up}
For $n\in \N$ and $p\in (0,\infty)$,
\begin{equation} \label{eq:P-up}
    \int_0^1 P_{4n,-}^p \leq  \frac{3\pi^2}{(4n+ \frac{1}{2})^2} \sum_{i=1}^{n} iy_{2i-1,4n}^p.
\end{equation}
where as above $y_{i,n}$ denotes the $i$th largest absolute value of an extremal value of $P_n$. 
\end{lemma}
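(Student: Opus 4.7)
The plan is to partition $[0,1]$ into the sign-intervals of $P_{4n}$, bound each negative-piece integral trivially by (length)$\,\cdot\,$(maximum of $|P_{4n}|^p$ on that piece), and then estimate the interval lengths via Bruns' inequality \eqref{eq:bruns}.

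First I would observe that on $[0,1]$, since $P_{4n}(0)>0$, $P_{4n}(1)=1$, and $P_{4n}$ changes sign at each of its $2n$ positive zeros, the set $\{P_{4n}<0\}$ in $[0,1]$ is exactly the union of the $n$ intervals $I_i := (z_{2i,4n},z_{2i-1,4n})$ for $i=1,\dots,n$. The unique local extremum of $P_{4n}$ inside $I_i$ is the minimum $x_{2i-1,4n}$, at which $|P_{4n}|=y_{2i-1,4n}$, so $|P_{4n}(x)|\le y_{2i-1,4n}$ on $I_i$. This yields
\[
\int_0^1 P_{4n,-}^p = \sum_{i=1}^n \int_{I_i} |P_{4n}|^p \le \sum_{i=1}^n (z_{2i-1,4n}-z_{2i,4n})\,y_{2i-1,4n}^p.
\]

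Next I would show $z_{2i-1,4n}-z_{2i,4n}\le \tfrac{3\pi^2 i}{(4n+1/2)^2}$. Bruns' inequality \eqref{eq:bruns} gives
\[
z_{2i-1,4n}-z_{2i,4n} \le \cos\!\Bigl(\tfrac{2i-3/2}{4n+1/2}\pi\Bigr)-\cos\!\Bigl(\tfrac{2i}{4n+1/2}\pi\Bigr).
\]
Applying the sum-to-product identity $\cos A-\cos B=2\sin(\tfrac{A+B}{2})\sin(\tfrac{B-A}{2})$ and then $\sin x\le x$ to each factor (the arguments are nonnegative) produces the bound
\[
2\cdot\frac{(4i-3/2)\pi}{2(4n+1/2)}\cdot\frac{(3/4)\pi}{4n+1/2}=\frac{3\pi^2(4i-3/2)}{4(4n+1/2)^2}\le\frac{3\pi^2 i}{(4n+1/2)^2},
\]
where the last step uses $4i-3/2<4i$. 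Substituting into the previous display gives \eqref{eq:P-up}.

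I expect the main bookkeeping concern to be lining up the indexing correctly: identifying that the negative intervals are those between $z_{2i,4n}$ and $z_{2i-1,4n}$, matching them to the extremal absolute value $y_{2i-1,4n}$, and applying Bruns' estimates in the direction appropriate to the decreasing convention $z_{1,4n}>z_{2,4n}>\cdots$ used in the paper. Beyond this, the trigonometric calculation and the final summation are routine; no deeper analytic obstacle seems to arise.
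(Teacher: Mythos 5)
Your proof is correct and follows essentially the same route as the paper: bound the negative part by the sum over sign-change intervals of (length)$\times$(extremal value)$^p$, estimate the lengths via Bruns' inequality, the sum-to-product identity, and $\sin x\le x$, then relax $4i-\tfrac32$ to $4i$. The only difference is cosmetic (you bound each interval length before summing, whereas the paper phrases the first step as an upper Darboux sum and simplifies after summing).
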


In particular, \cref{lem:tri} implies that
\begin{equation} \label{eq:ls}\int_0^1P_{4n,+}^p \geq \int_{z_{4n}}^1 P_{4n,+}^p = \int_{z_{4n}}^1 P_{4n}^p \geq \frac{1}{2(p+1)n(4n+1)}. \end{equation}

Combining these two lemmas, we will prove the following proposition, which will lead us directly to \cref{thm:main} in \cref{sec:finalproof}.

\begin{proposition} \label{prop:lim}
For $p\in(4,\infty)$, there exists a sequence of increasing natural numbers $n$ such that the sequence of quotients $\int_0^1 P_{4n,+}^p/\int_0^1 P_{4n,-}^p$ is convergent and satisfies
\[\lim_{n\to\infty} \frac{\int_0^1 P_{4n,+}^p}{\int_0^1 P_{4n,-}^p} \geq \frac{1}{p+1}\frac{2}{3\pi^2}\Big(\sum_{i=1}^{\infty}i|J_0(j_{2i-1})|^p\Big)^{-1},\] 
where as above $J_0(j_k)$ is the value of the $k$th local extrema after $x=0$ of the zeroth Bessel function of the first kind.
\end{proposition}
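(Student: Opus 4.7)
The approach is to divide the lower bound on $\int_0^1 P_{4n,+}^p$ from \eqref{eq:ls} by the upper bound on $\int_0^1 P_{4n,-}^p$ from \cref{lem:P-up}, and then pass to the limit on the right-hand side. Combining the two bounds gives, for every $n \geq 1$,
$$\frac{\int_0^1 P_{4n,+}^p}{\int_0^1 P_{4n,-}^p} \geq \frac{(4n+\tfrac{1}{2})^2}{6\pi^2(p+1)\,n(4n+1)\,\sum_{i=1}^{n} i\, y_{2i-1,4n}^p}.$$
The algebraic prefactor $(4n+\tfrac{1}{2})^2/[n(4n+1)]$ tends to $4$, so the factor in front of the sum converges to $2/(3\pi^2(p+1))$. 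Therefore it suffices to establish
$$\lim_{n\to\infty}\sum_{i=1}^n i\, y_{2i-1,4n}^p = \sum_{i=1}^\infty i\,|J_0(j_{2i-1})|^p < \infty.$$

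To justify this, the plan is to apply Tannery's theorem (dominated convergence on $\N$). Cooper's identity \eqref{eq:cooper} provides the pointwise convergence $y_{2i-1,4n}\to |J_0(j_{2i-1})|$ for each fixed $i$, so the real work is producing a summable $n$-independent dominating bound on $i\,y_{2i-1,4n}^p$. Since the extremum $x_{2i-1,4n}$ lies between the consecutive zeros $z_{2i,4n}$ and $z_{2i-1,4n}$, Bruns' estimate \eqref{eq:bruns} gives $x_{2i-1,4n}\leq \cos\bigl(\tfrac{(2i-1)\pi}{4n+1/2}\bigr)$. Because $(2i-1)/(4n+1/2) < 1/2$ whenever $i\leq n$, the elementary bound $\sin\theta\geq 2\theta/\pi$ on $(0,\pi/2)$ yields
$$\sqrt{1-x_{2i-1,4n}^2}\geq \frac{2(2i-1)}{4n+1/2}.$$
Feeding this into Bernstein's inequality \eqref{eq:bernstein} produces a bound of the form $y_{2i-1,4n}\leq C(2i-1)^{-1/2}$ with $C$ independent of $n$, so $i\,y_{2i-1,4n}^p \leq C^p\,i(2i-1)^{-p/2}$, which is summable in $i$ precisely when $p>4$. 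The same mechanism, using Szeg\H{o}'s bound \eqref{eq:szegop.167} together with the lower bound \eqref{eq:watson} on $j_i$, shows that the limiting series $\sum_{i} i|J_0(j_{2i-1})|^p$ is itself finite under the same hypothesis.

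Finally, the proposition asks for a convergent subsequence rather than merely $\liminf$, so I would invoke \cref{thm:j-n} to conclude that $\int_0^1 P_{4n,+}^p/\int_0^1 P_{4n,-}^p$ is uniformly bounded in $n$, then extract a convergent subsequence by Bolzano--Weierstrass; the per-$n$ inequality above forces its limit to satisfy the stated bound. The main obstacle is the uniform estimate on $y_{2i-1,4n}$: one must correctly identify the extremum as lying between the appropriate pair of zeros, combine Bruns' and Bernstein's inequalities carefully, and verify that the resulting series converges exactly on the open half-line $p>4$ prescribed by the hypothesis.
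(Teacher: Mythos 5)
Your proposal is correct and follows essentially the same route as the paper: combine \eqref{eq:ls} with \cref{lem:P-up}, use \cref{thm:j-n} plus Bolzano--Weierstrass to extract a convergent subsequence, and justify the limit interchange by dominated convergence (Tannery) with a dominating bound built from Bernstein's inequality \eqref{eq:bernstein}, Bruns' estimate \eqref{eq:bruns}, and $\sin\theta\geq 2\theta/\pi$, which is summable exactly for $p>4$. The only difference is in how the dominating sequence is produced: the paper invokes Szeg\H{o}'s monotonicity of $y_{i,n}$ in $n$ to reduce to the first nonzero terms $y_{2i-1,4i}$ before applying Bernstein and Bruns, whereas you bound $y_{2i-1,4n}$ uniformly in $n$ directly (the $n$-dependence cancels between the two estimates), which is a slight simplification that avoids the extra citation.
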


%%%%%%%%%%%%%%%%%%%%%%%%%%
\subsection{Proof of \cref{lem:tri}} \label{sec:lo}

Consider the following piecewise linear function defined on $[z_{n},1]$:
\[g(x) = \begin{cases}
0&\text{if}\quad x \in \left[z_n, 1-\frac{2}{n(n+1)}\right],\\
\frac{n(n+1)}{2}x - \frac{n(n+1)}{2} +1 &\text{if}\quad x \in \left[ 1-\frac{2}{n(n+1)},1\right].
\end{cases}\]
To prove the lemma, it will be enough to show that $P_{n}(x)\geq g(x)$ on $[z_n,1]$ since
$$
\int_{z_n}^1 g(x)^p\dd{x}=\int_{1-\frac{2}{n(n+1)}}^1\Big(\frac{n(n+1)}{2}x - \frac{n(n+1)}{2} +1\Big)^p\dd{x} = \frac{2}{n(n+1)}\int_0^1 x^p\dd{x} = \frac{2}{(p+1)n(n+1)}.
$$
% \[\int_{1-\frac{2}{n(n+1)}}^1 g(x)^p\dd{x}=\qty(\frac{n(n+1)}{2})^p\int_0^{\frac{2}{(n+1)}}x^p \dd{x}= \frac{2}{(p+1)n(n+1)}. \]

We start by verifying that $g$ is well-defined, i.e., that $z_n \leq 1-\frac{2}{n(n+1)}$. For the sake of contradiction, we assume that $z_n > 1-\frac{2}{n(n+1)}$ and split into two cases. First, suppose $P_{n}'(z_n) \leq  n(n+1)/2$. By the mean value theorem, there must be some point $q \in (z_n,1)$ at which $P'_{n}(q) > n(n+1)/2$. Note since  $P_{n}'(1)$ and $P_{n}'(z_n)$ are less than or equal to $ n(n+1)/2$, by the extreme value theorem applied to $P'_n$, there must be some local maximum $r\in(z_n,1)$ of $P_n'$ at which $P''_n(r)=0$ and $P'_n(r)> n(n+1)/2$. The differential equation definition of the Legendre polynomials \eqref{eq:odeleg} then yields 
\[0 = \frac{1}{1-r^2}(2rP'_n(r) -n(n+1)P_n(r)) >\frac{n(n+1)}{1-r^2} (r-P_n(r)), \]
which is a contradiction with \cref{lem:pn<x}. 
On the other hand, if $P_{n}'(z_n) >  n(n+1)/2$, then \eqref{eq:odeleg} gives
\[P''_n(z_n) =\frac{2z_nP'_n(z_n)}{1-z_n^2}>0, \]
so $P'_n$ is increasing in a neighbourhood of $z_n$. Since $P'_n(1)<P'_n(z_n)$, there must be some point $r\in(z_n,1)$ at which $P''_n(r)=0$ and $P'_n(r)> n(n+1)/2$, which is a contradiction as in the first case. Hence $g$ is well-defined. 

Note that $P_n(x)\geq g(x)$ holds trivially on $\left[z_n, 1-\frac{2}{n(n+1)}\right]$ by what we have just proved, so assume there is some point $q \in \qty(1-\frac{2}{n(n+1)},1)$ at which $P_n(q) < g(q)$. By the mean value theorem, there must then be some $r \in (q,1)$ with $P_n'(r)>n(n+1)/2$, and so, by the extreme value theorem for $P'_n$, there is some point $s \in (q,1)$ with $P'_n(s)>n(n+1)/2$ and $P''_n(s)=0$. By the same logic as above using \eqref{eq:odeleg}, we arrive at a contradiction, hence there is no $q\in [z_n,1]$ such that $P_n(q) < g(q)$.

%%%%%%%%%%%%%%%%%%%%%%%%%%
\subsection{Proof of \cref{lem:P-up}} \label{sec:up} 
We can upper bound the integral of $P_{4n,-}^p$ using an upper Darboux sum  with a partition $\mathcal{P}_{4n}$ given by the zeroes of $P_{4n}$, that is, $\mathcal{P}_{4n}= (0, z_{2n,4n},z_{2n-1,4n}, \ldots , z_{1,4n},1)$. Since there is one local extremum between each zero and the extrema oscillate in sign, we have
\begin{equation*} \label{eq:us}\int_0^1 P_{4n,-}^p \leq  \sum_{i=1}^n (z_{2i-1,4n}-z_{2i,4n})y_{2i-1,4n}^p. \end{equation*}
Using Bruns' estimates \eqref{eq:bruns} as well as the identity $\cos(x)-\cos(y)=2\sin(\frac{x+y}{2})\sin(\frac{y-x}{2})$, we obtain
\begin{align} 
   \int_0^1 P_{4n,-}^p &\leq \sum_{i=1}^{n}\Big[\cos\Big(\frac{2i-\frac{3}{2}}{4n+\frac{1}{2}}\pi\Big)-\cos\Big(\frac{2i}{4n+\frac{1}{2}}\pi\Big)\Big]y_{2i-1,4n}^p \nonumber\\
    &= 2\sum_{i=1}^{n}\sin\Big(\frac{2i-\frac{3}{4}}{4n+\frac{1}{2}}\pi\Big)\sin\Big(\frac{3}{4(4n+\frac{1}{2})}\pi\Big)y_{2i-1,4n}^p. \label{eq:ratioin2}
\end{align}
The sine arguments in the above expression lie within $[0,\pi/2]$ and so the approximation $x\geq \sin x$ holds, hence \eqref{eq:ratioin2} is at most
% \begin{align*}
% &\frac{3\pi^2}{2(4n+ \frac{1}{2})^2} \sum_{i=1}^{n}\left(2i-\frac{3}{4}\right)y_{2i-1,4n}^p \\
% &\leq \frac{3\pi^2}{(4n+ \frac{1}{2})^2} \sum_{i=1}^{n} iy_{2i-1,4n}^p
% \end{align*}
\begin{align*}
\frac{3\pi^2}{2(4n+ \frac{1}{2})^2} \sum_{i=1}^{n}\Big(2i-\frac{3}{4}\Big)y_{2i-1,4n}^p
\leq \frac{3\pi^2}{(4n+ \frac{1}{2})^2} \sum_{i=1}^{n} iy_{2i-1,4n}^p.
\end{align*}

%%%%%%%%%%%%%%%%%%%%%%%
\subsection{Proof of \cref{prop:lim}}

Combining \eqref{eq:ls} with \eqref{eq:P-up}, we get
\begin{align} \label{eq:lim}
    \frac{\int_0^1 P_{4n,+}^p}{\int_0^1 P_{4n,-}^p} &\geq \frac{1}{p+1}\frac{1}{6\pi^2} \frac{(4n+ \frac{1}{2})^2}{n(4n+1)} \Big(\sum_{i=1}^{n} iy_{2i-1,4n}^p\Big)^{-1} \\
    &= \frac{1}{p+1}\Big(\displaystyle\sum_{i=1}^{n}iy_{2i-1,4n}^p\Big)^{-1} \Big( \frac{2}{3\pi^2}+\mathcal{O}(n^{-1})\Big). \nonumber
\end{align}

By \cref{thm:j-n}, for any $p \in (1,\infty)$, $\int_0^1 P_{4n,+}^p/\int_0^1 P_{4n,-}^p$ belongs to a compact interval $[1/C,C]$ independent of $n$ and so it must have a convergent subsequence. Also recall from \eqref{eq:cooper} that Cooper showed $\lim_{n\to\infty} y_{i,n} = J_0(j_i)$, so \cref{prop:lim} follows upon taking limits as $n$ goes to infinity on both sides of \eqref{eq:lim}, as long as we can push the limit inside the summation sign where we consider, for each $i$, $y_{2i-1,4n}$ as an infinite sequence in $n$ which is zero for $n < i$. Note that because of a result of Szeg{\H o} \cite{szego_orth_pol} which says that $y_{i,n}$ is decreasing in $n$ for a given $i$, the sum $\sum_{i=1}^\infty i y_{2i-1,4i}^p$, which consists of the first nonzero term of every sequence $y_{2i-1,4n}$ considered above, termwise dominates $\sum_{i=1}^{n} iy_{2i-1,4n}^p$ for each $n$.
By the dominated convergence theorem applied to the counting measure on $\N$, it suffices to verify that the dominating sum converges for $p>4$.

Recall that we defined $y_{i,n}=|P_n(x_{i,n})|$, and so
$$y_{2i-1,4i} = |P_{4i}(x_{2i-1,4i})|
\leq \frac{1}{\sqrt{2\pi i}}(1-x_{2i-1,4i}^2)^{-1/4}$$ by \eqref{eq:bernstein}.
Since the zeroes of the Legendre polynomials interlace with the critical points, and the greatest zero $z_{1,n}$ is always greater than the greatest critical point $x_{1,n}$, we know that $x_{2i-1,4i}<z_{2i-1,4i}$, so we obtain, coupling it with Bruns' inequality \eqref{eq:bruns}, 
\begin{align} 
   y_{2i-1,4i} &\leq \frac{1}{\sqrt{2\pi i}}(1-z_{2i-1,4i}^2)^{-1/4} \nonumber \\
   &\leq \frac{1}{\sqrt{2\pi i}}\Big(1-\cos^2\Big(\frac{2i-1}{4i+\frac{1}{2}}\pi\Big)\Big)^{-1/4} \nonumber\\
&=\Big(2\pi i\sin\Big(\frac{2i-1}{4i+\frac{1}{2}}\pi\Big)\Big)^{-1/2}.\label{eq:iny2}
\end{align}
Using the fact that $\sin x\geq 2x/\pi$ for $x\in[0,\pi/2]$ and $0<(2i-1)/(4i+1/2)<1/2$ for $i\geq 1$, \eqref{eq:iny2} is at most
$$
\Big(2\pi i\frac{4i-2}{4i+\frac{1}{2}}\Big)^{-1/2}.
$$
Since $(4i-2)/(4i+1/2)$ is increasing and is equal to $4/9$ for $i=1$, we have
\begin{align*}
\sum_{i=1}^\infty iy_{2i-1,4i}^p &\leq \sum_{i=1}^\infty i\Big(\frac{8\pi i}{9}\Big)^{-p/2}\\
&=\Big(\frac{3}{2\sqrt{2\pi}}\Big)^p\sum_{i=1}^\infty i^{1-p/2},
\end{align*}
which converges for $p>4$.

\subsection{Proof of \cref{thm:main}} \label{sec:finalproof}
First, let us consider $p=\infty$. The argument was outlined in \cite{J-N} but we include it here for completeness. For the $2n$th zonal spherical harmonic, we know from our analysis of the extremal points and from \eqref{eq:cooper} that
\[\lim_{n\to\infty} \frac{\|P_{2n}^+\|_{\infty}}{\|P_{2n}^-\|_{\infty}} = \lim_{n\to\infty}\frac{1}{y_{1,n}} = \frac{1}{J_0(j_1)}.\] 
Cooper also states in \cite{cooper_1950} that to four significant figures, $J_0(j_1)= 0.4027$, whence 
\[\lim_{n\to\infty} \frac{\|P_{2n}^+\|_{\infty}}{\|P_{2n}^-\|_{\infty}} \geq \frac{1}{0.403} \geq 2.48,\]
which completes the proof for $p=\infty$.

We are now ready to finish the proof of \cref{thm:main} for $6 \leq  p <\infty$, starting with the remarks made at the beginning of \cref{sec:main} which imply that it will be enough to prove that 
% for $p\geq 6$,
$$\lim_{n\to\infty} \frac{\int_0^1 P_{4n,+}^p}{\int_0^1 P_{4n,-}^p} > 1. 
$$

% \begin{proof}
% By a change of variables for the zonal spherical harmonics, it is enough to consider the analogous statement for the Legendre polynomials on $[0,1]$, in particular using subsequences of the $4n$th polynomials.
% Thus, it will be enough to prove that for $p\geq 6$,
% $$\lim_{n\to\infty} \frac{\int P_{4n,+}^p}{\int P_{4n,-}^p} > 1. 
% $$
First, observe that 
$$ \frac{1}{p+1}\frac{2}{3\pi^2}\Big(\sum_{i=1}^{\infty}i|J_0(j_{2i-1})|^p\Big)^{-1}$$
is increasing in $p$. To see this, it is enough to show that term by term, $(p+1)|J_0(j_{2i-1})|^p$ is decreasing. From calculus, for a given $0<c<1$, one can verify that $(x+1)c^x$ is decreasing for $x> 1/\log(1/c)-1$. Since $|J_0(j_1)|<0.5$, and $|J_0(j_i)|$ is a decreasing sequence in $i$, it is enough that 
% $p>0.5> 1/\log(2)-1$ 
$p> 1/\log(2)-1$ 
for the quantity $(p+1)|J_0(j_{2i-1})|^p$ to be decreasing in $p$ for all $i$, which holds from $p=6$ onwards.

Second, combining this last observation with \cref{prop:lim}, we will be done if we show that 
\[\sum_{i=1}^{\infty}i|J_0(j_{2i-1})|^6<\frac{2}{21\pi^2}.\]
From \eqref{eq:watson}, we have $j_{2i-1} \geq (2i-1)\pi$. Coupling this with the fact that Szeg{\H o}'s bound \eqref{eq:szegop.167} is decreasing in $x$, we have
\[J_0(j_{2i-1}) \leq \sqrt{\frac{2}{\pi j_{2i-1}}} \leq \sqrt{\frac{2}{\pi^2(2i-1)}},\] and so
$$
\sum_{i=1}^{\infty}i|J_0(j_{2i-1})|^6 \leq \frac{8}{\pi^6}\sum_{i=1}^\infty \frac{i}{(2i-1)^{3}}.
$$

Using partial fraction decomposition and rewriting using the Hurwitz zeta function, we get 
\begin{equation} \label{eq:hur1}
    \frac{8}{\pi^6}\sum_{i=1}^\infty \frac{i}{(2i-1)^{3}} = \frac{1}{\pi^6}\Big(\zeta\Big(2,-\frac{1}{2}\Big)-4+\frac{1}{2}
    \Big(\zeta\Big(3,-\frac{1}{2}\Big)-8\Big)\Big).
\end{equation}
Using the identities $\zeta\qty(s,\frac{1}{2})=\zeta\qty(s,-\frac{1}{2})-2^{s}$ for $s>1$ and $\zeta\qty(s,\frac{1}{2})=(2^s-1)\zeta(s)$, where $\zeta(s)$ is the usual Riemann zeta function, we obtain that \eqref{eq:hur1} is equal to
\begin{align} 
\frac{1}{\pi^6}\Big(\zeta\Big(2,\frac{1}{2}\Big)+\frac{1}{2}\zeta\Big(3,\frac{1}{2}\Big)\Big) 
= \frac{1}{\pi^6}\Big(3\zeta(2)+\frac{7}{2}\zeta(3)\Big). \label{eq:hur2}
\end{align}
Since $\zeta(2)=\pi^2/6$ and $\zeta(3)$ is Ap\'ery's constant which is $<1.2021$, \eqref{eq:hur2} is
$$
<\frac{1}{2\pi^6}\qty(\pi^2 + 7\cdot 1.2021) < 0.00951.
$$
On the other hand,
\[\frac{2}{21\pi^2} > 0.00964>0.00951>\sum_{i=1}^{\infty}i|J_0(j_{2i-1})|^6,\]
which concludes the proof.
% \end{proof}

We note that the result of \cref{thm:main} has a clear corollary on $\R P^2$,
\begin{corollary}\label{cor:rp}
There is a sequence of eigenfunctions $\widetilde{\psi}_n$ of the Laplacian on the real projective plane with its usual metric whose eigenvalues $\lambda_n\to\infty$ as $n\to\infty$ and such that for all $p\geq 6$,
\[\lim_{n\to\infty}\frac{\|\widetilde{\psi}_{n,+}\|_p}{\|\widetilde{\psi}_{n,-}\|_p} > 1.\]
\end{corollary}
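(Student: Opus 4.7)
The plan is to view $\mathbb{RP}^2$ as the quotient $S^2/\{\pm 1\}$ under the antipodal map, equipped with the quotient metric, so that eigenfunctions of the Laplacian on $\mathbb{RP}^2$ correspond precisely to antipodally invariant eigenfunctions on $S^2$. Since the even-degree zonal spherical harmonic $P_{4n}(\cos\theta)$ is an even polynomial of $\cos\theta$, we have $P_{4n}(\cos(\pi-\theta)) = P_{4n}(-\cos\theta) = P_{4n}(\cos\theta)$, so the sequence $\psi_n = P_{4n}(\cos\theta)$ used in \cref{thm:main} is antipodally invariant and therefore descends to a well-defined sequence of eigenfunctions $\widetilde{\psi}_n$ on $\mathbb{RP}^2$ with the same eigenvalues $\lambda_n = 4n(4n+1) \to \infty$.

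Next I would observe that, because the projection $S^2 \to \mathbb{RP}^2$ is a local isometry and a double cover, for any continuous function $f$ on $\mathbb{RP}^2$ lifted to an antipodally invariant function $F$ on $S^2$, one has $\|F\|_{L^p(S^2)}^p = 2\|f\|_{L^p(\mathbb{RP}^2)}^p$. Applying this separately to the positive and negative parts (which are themselves antipodally invariant since the function is) gives
\[
\frac{\|\widetilde{\psi}_{n,+}\|_p}{\|\widetilde{\psi}_{n,-}\|_p} = \frac{\|\psi_{n,+}\|_p}{\|\psi_{n,-}\|_p}.
\]

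Finally, the conclusion of \cref{thm:main} for $p \geq 6$ transfers directly, and the case $p = \infty$ follows from the same identification since the sup of an antipodally invariant function on $S^2$ equals the sup of its projection to $\mathbb{RP}^2$. There is no real obstacle here, as the corollary is essentially a bookkeeping exercise in exploiting that the eigenfunctions constructed in \cref{thm:main} already live on the quotient $\mathbb{RP}^2$; the only thing to verify carefully is the parity of $P_{4n}$ so that the descent is valid.
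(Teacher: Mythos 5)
Your proposal is correct and follows essentially the same route as the paper: the even zonal harmonics are antipodally invariant, hence descend through the local isometry $S^2\to\R P^2$, and the $L^p$ norms of the positive and negative parts each just pick up the same factor of $2$ (which you make explicit, while the paper phrases it as lifting back to the double cover), so the ratio is unchanged and \cref{thm:main} transfers. No gaps.
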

\subsection*{Proof of \cref{cor:rp}}
Consider the sequence of eigenfunctions $\psi_n$ from \cref{thm:main}. We know they are even and so they descend to functions $\widetilde{\psi}_n$ on $\R P^2$ under the quotient of $S^2$ by the equivalence relation $x \sim -x$. Since this quotient is a local isometry, the functions $\widetilde{\psi}_n$ are eigenfunctions with the same eigenvalues. By lifting back up to the orientable double cover, we see that these eigenfunctions have the same ratio of positive to negative $L^p$ norms as for the sphere. Then by \cref{thm:main} the result follows.

\section{Conclusion.} The estimates made throughout our lemmas are fairly crude and the statement of our result for $p\geq 6$ was chosen for the niceness of the number; with effort, this value of 6 may be brought down. However, the restriction of $p>4$ from Lemma 6 seems to be a strict bound for approximations similar to the ones from our proof. Generalizations of the arguments may also be possible to higher dimensional spheres by studying the Gegenbauer polynomials.

This paper, along with the one by the authors and Eagles and Wang, establish a failure of generalized symmetry in model spaces of both zero and constant positive curvature. In the opinion of the authors, it is an interesting (and likely more challenging) question to investigate the conjecture 
% and look for counterexamples 
on manifolds of constant negative curvature, where it is believed to hold due to the conjectures of quantum chaos alluded to in the Introduction.

\medskip

\subsection*{Acknowledgments}
\noindent
This research was conducted as part of the 2021 Fields Undergraduate Summer
Research Program. The authors are grateful to the Fields Institute for their financial support and facilitating our online collaboration. The authors would also like to thank \'Angel D. Mart\'inez and Francisco Torres de Lizaur for suggesting this project and reviewing an earlier version of this work.

\bibliographystyle{acm}
\bibliography{references.bib}
\parindent0pt
\end{document}